\newcommand{\cC}{\mathcal C}
\newcommand{\bfw}{{\mathbf w}}
\newcommand{\cV}{\mathcal V}
\newcommand{\cS}{\mathcal S}
\newcommand{\fA}{\mathfrak A}
\newcommand{\fF}{\mathfrak F}
\newcommand{\Aut}{{\operatorname{Aut}\,}}
\newcommand{\PG}{{\mathrm{PG}}\,}
\newcommand{\AG}{{\mathrm{AG}}\,}
\newcommand{\PGL}{{\mathrm{PGL}}\,}
\newcommand{\PgL}{{\mathrm{P\Gamma L}}\,}
\newcommand{\FF}{{\mathbb F}}
\theoremstyle{plain}
\newtheorem{prop}{Proposition}[section]
\newtheorem{theorem}[prop]{Theorem}
\newtheorem{corollary}[prop]{Corollary}
\theoremstyle{definition}
\newtheorem{definition}[prop]{Definition}
\newtheorem{remark}[prop]{Remark}
\title{Families of twisted tensor product codes}
\author{L.~Giuzzi\thanks{Part of this
    research has been performed while a guest of
the Department of Mathematics of Ghent University.}
\and V.~Pepe}
\begin{document}
\maketitle
\begin{abstract}
Using geometric properties of the
variety $\cV_{r,t}$, the image
under the Grassmannian map
of a Desarguesian $(t-1)$-spread of $\PG(rt-1,q)$,
we introduce error correcting codes related to
the twisted tensor product construction, producing several families of
constacyclic codes. We exactly determine the
parameters of these codes and
characterise the words of minimum weight.
\end{abstract}
\par\noindent
{\bfseries{Keywords}}: Segre Product, Veronesean, Grassmannian,
Desarguesian spread, Subgeometry, Twisted Product,
Constacyclic error correcting code, Minimum
weight.
\par\noindent
{\bfseries{MSC(2010)}}: 94B05, 94B27, 15A69, 51E20.

\section{Introduction}
Linear codes are one of the simplest, yet powerful, methods to add
redundancy to a message in order to provide protection against
transmission errors.
For a general reference on coding theory, including standard
notations, see \cite{MS}.
As customary, we regard
a $q$-ary linear code $\cC$ with parameters $[n,k,d]$ as a subspace of
dimension $k$ of $\FF_q^n$ whose non-zero vectors have Hamming weight
at least $d$.
Recall that the correction capacity of a code, at least
on a first approximation, depends on its minimum distance $d$. Indeed,
the most probable undetected errors are exactly those corresponding to
words of minimum weight; thus, it is in practice quite important to be able to
count and characterise such words.
In general, the complexity of implementation of a
code is tied to the size $q$ of the field under consideration.
This is one of the reasons why some of the
most widely used codes are actually defined over the binary field $\FF_2$,
even if some the actual computations involved in the correction procedure
are performed over algebraic extensions, as in the case of the BCH
construction.
On the other hand, 
some recent applications warrant for the use of non-binary codes
in a `natural way', as, in order
to increase the storage density of data, addressable
units larger than a single bit are often selected.
\par
A code $\cC$ might be determined by either a \emph{generator matrix}
$G$, a matrix whose rows constitute a basis for the subspace $\cC$ of
$\FF_q^n$, or, dually, a \emph{parity check matrix} $H$,
that is a matrix providing a basis for the annihilator of $\cC$ in 
$\FF_q^n\,^{\perp}$.
In this paper we shall adopt the latter approach;
in particular, the code $\cC$ is the kernel of the linear
application induced by $H$.
\par
The link between incidence structures
and coding theory has been very fruitful. Possibly, it has
 been first pointed out in \cite{PLJ};
for a reference on
on the  development of the topic and some of the related
problems we refer to the book \cite{AK}.
Codes derived from geometries have proven themselves
to be interesting for several
reasons, not least the possibility of providing synthetic constructions and
their usually large automorphism group.
\par
In this paper we shall study some codes related to
the Segre embedding and twisted tensor products; see \cite{B,CD}.
Our constructions generalise and strengthen some of the results of \cite{B},
albeit using different techniques.
In Section \ref{s:v} we shall recall some properties of the algebraic
variety $\cV_{r,t}$ and prove that any $t+1$ of its points are in
general position. 
In Section \ref{s:c}, using these results,
some new families of codes $\cC_{r,t}$
will be introduced. We will
determine their parameters and also characterise the words of minimum weight.
All of these codes admit a large subgroup of monomial
automorphisms, isomorphic
to $\PgL(r,q^t)$ and  they are always constacyclic in the
sense of \cite{BA}. We shall also investigate some subcodes
and show that
 by puncturing
in a suitable way it is possible to obtain cyclic codes.

\section{The variety $\cV_{r,t}$ and twisted tensor products}
\label{s:v}

Let $\PG(V,\FF)$ be the projective space defined by the lattice
of subspaces of the vector space $V$ over the field $\FF$ and
write $\PG(n-1,q):=\PG(V,\FF_q)$, where $\dim_{\FF_q} V=n$.
Take $\PG(r_1-1,q)$, $\PG(r_2-1,q),\ldots, \PG(r_t-1,q)$ to be $t$
distinct projective spaces; the \emph{Segre embedding}
\[\sigma:
\PG(r_1-1,q) \times \PG(r_2-1,q) \times \cdots \times \PG(r_t-1,q)
\longrightarrow \PG(r_1r_2 \cdots r_t-1,q) \]
 is the map such that
$\sigma(\mathbf{x}^1,\ldots,\mathbf{x}^t)$ is the vector of all the 
possible products $x_{j_1}^{(1)}x_{j_2}^{(2)}\cdots x_{j_t}^{(t)}$, as
$\mathbf{x}^i=(x_0^{(i)},x_1^{(i)},\ldots,x_{n_i-1}^{(i)})$ varies in
$\PG(r_i-1,q)$.
The image of $\sigma$ is the \emph{Segre variety}
$\Sigma_{r_1;r_2;\ldots;r_t}$: it can be regarded, in some way, as a 
product of projective spaces; see \cite[Chapter 25]{Ht}, \cite[Chapter 9]{Ha}
and \cite[Chapter 2]{H}. In the language of tensor products, 
$\sigma$ is the natural  \emph{morphism} between the varieties
\[ \PG(V_1,q) \times
\PG(V_2,q) \times \cdots \times \PG(V_t,q) \longrightarrow
\PG(V_1\otimes V_2 \otimes \cdots \otimes V_t,q). \]
In this paper, we are interested in
the case $r_1=r_2=\ldots,r_t=r$; for brevity we shall
write $\Sigma_{r^t}$ instead of
$\Sigma_{r_1;r_2;\ldots;r_t}$. Clearly,
$\Sigma_{r^t}\subseteq\PG(r^t-1,q)$.
\par
The \emph{Veronese variety} $\cV(n,d)$ is an algebraic
variety of $\PG({{n+d}\choose{d}}-1,q)$, image of the injective map
\[ v_{n,d}:\PG(n,q)\longrightarrow \PG({{n+d}\choose{d}}-1,q), \]
where
$v_{n,d}(x_0,x_1,\ldots,x_n)$ is the vector of all the monomials of
degree $d$ in $x_0,\ldots,x_n$; for $d=2$, see \cite[Chapter 25]{Ht};
for general $d$, see \cite[Chapters 2,9]{H} and also \cite{CLS}.
It is useful to remember that $\cV(1,d)$ is a \emph{normal
rational curve} of $\PG(d,q)$ and any $d+1$ of its points
happen to be in general position.
The Veronese variety $\cV(r,t)$
and $\Sigma_{r^t}$ are closely related, in the
sense that $\cV(r,t)$ is the image under $\sigma$ of the
\emph{diagonal} of $\PG(r-1,q) \times \PG(r-1,q) \times \cdots
\times \PG(r-1,q)$. \par
Take now the projective space $\PG(r-1,q^t)$ and
let $v\longmapsto v^q$ be the $\FF_q$--linear collineation
of order $t$ induced by the
by the Frobenius automorphism of the extension $[\FF_{q^t}:\FF_q]$.
For any $P\in\PG(r-1,q^t)$,
write
\[ {P}^{\alpha}=\sigma([P,P^q,\ldots,P^{q^{t-1}}]). \]
The image of this correspondence is the variety $\cV_{r,t}$.
It is immediate to see that the $\FF_q$--linear
collineation of order $t$
given by
\[ (p_0\otimes p_1\otimes\cdots\otimes p_{t-1}) \longmapsto
(p_{t-1}^q\otimes p_{0}^q\otimes\cdots\otimes p_{t-2}^q) \]
 fixes  $\cV_{r,t}$ point--wise;
hence, $\cV_{r,t}$ is contained
in a subgeometry $\Omega=\PG(r^t-1,q)$ of $\PG(r^t-1,q^t)$.
It turns out that $\cV_{r,t}$ is, in fact, the complete intersection of
the Segre product $\Sigma_{r^t}$ with $\Omega$.

As an algebraic variety $\cV_{r,t}$ first appeared in \cite{Segre};
it has then been described 
in \cite{LunardonNS} and therein extensively studied.
Recently, in
\cite{V},  an explicit parametrisation for $\cV_{r,t}$
has been determined, leading to
the discovery of some new properties. 
It is convenient to recall here this parametrisation.
Take  $\fF=\{ f: \{0,\ldots, t-1\}\to\{0,\ldots,r-1\} \}$ and write
$P=(x_0,\ldots,x_{r-1})\in\PG(r-1,q^t)$.
Then, there is an
injective map $\alpha:\PG(r-1,q^t)\to\cV_{r,t}\subseteq\PG(r^t-1,q^t)$
sending any $P\in\PG(r-1,q^t)$ to the point $P^{\alpha}\in\PG(r^t-1,q^t)$
whose coordinates consist of
all products of the form 
\[ \prod_{i=0}^{t-1} x_{f(i)}^{q^i} \]
as $f$ varies in $\fF$. 
\par
There is a strong affinity 
between the Veronese variety and $\cV_{r,t}$: take 
$\psi \in \PgL(r,q^t)$ so that $\psi^t=id$ and let
$\cV$ be the image under $\sigma$ of the
elements of type $(v,v^{\psi},\ldots,v^{\psi^{t-1}})$; clearly
$\cV$ is a variety; furthermore, when
$\psi=id$, then
$\cV=\Sigma_{r^t} \cap\PG({{r-1+t}\choose{t}}-1,q^t)$ is a Veronese variety;
if, on the contrary,
$\psi$ is a $\FF_q$--linear collineation of order $t$,
ultimately
determining a subgeometry $\Omega=\PG(r^t-1,q)$,
then $\cV=\Sigma_{r^t} \cap\Omega$
and $\cV=\cV_{r,t}$.
\par
We shall also make use of
the alternative description of $\cV_{r,t}$ from
\cite{LunardonNS}.
A \emph{Desarguesian} (also called \emph{normal}) \emph{spread} 
of $\PG(rt-1,q)$ is projectively equivalent to
a linear representation of $\PG(r-1,q^t)$
in $\PG(rt-1,q)$; see \cite{Segre}. As such, it consists of a
collection $\cS$ of $(t-1)$-dimensional subspaces of $\PG(rt-1,q)$,
each of them  the linear representation of a point of
$\PG(r-1,q^t)$, partitioning the point set of $\PG(rt-1,q)$.
When regarded
on the Grassmannian of all the $(t-1)$-dimensional subspaces of
$\PG(rt-1,q)$, the elements of $\cS$ determine the algebraic variety
$\cV_{r,t}$. The best known example is 
for $r=t=2$:
indeed, the Grassmannian of the lines of a
Desarguesian spread of $\PG(3,q)$ is an elliptic quadric
$\cV_{2,2}=\mathcal{Q}^-(3,q)$; see, for instance, \cite[Section 15.4]{james3}.
\par
More in general, if $\Pi_P\in\cS$ is
linear representation 
of a point $P \in \PG(r-1,q^t)$, then the
image under the Grassmann map of $\Pi_P$ is $P^{\alpha}$. Using this
correspondence, it has been possible to investigate several proprieties of
$\cV_{r,t}$; see \cite{L2,LunardonNS,V}. Here we will recall just some
of them.
As the group $\PgL(r,q^t)$ preserves a Desarguesian
$(t-1)$--spread $\mathcal{S}$ of $\PG(rt-1,q)$, its lifting preserves
$\cV_{r,t}$ and its action on the points of $\cV_{r,t}$ is isomorphic
to the $2$--transitive action of $\PgL(r,q^t)$ on the elements of $\cS$;
see \cite{LunardonNS}. We remark that the aforementioned action is
actually $3$--transitive for $r=2$. 

The group $G=\PGL(r,q^t)$ acts
in a natural way on $M=\PG(r-1,q^t)$, which is both a
$G$--module and an $\FF_{q^t}$-vector space.
The \emph{twisted tensor product}  has been introduced in
\cite{St} to realise a new $G$-module, say $M'$, defined over 
the subfield $\FF_q$ form $M$; this induces a
straightforward embedding of  $\PGL(r,q^t)$
in $\PGL(r^t,q)$.
We briefly recall the construction.
Write the action of $G$ on $M$ as
 $g\cdot P\to gP$, where  $g\in G$ and
$P\in M$. For any automorphism  $\phi$ of $\FF_{q^t}$,
we can define  a new $G$-module,
$M^{\phi}$ with group action $g\cdot P\to g^{\phi}P$;
when $\phi$ is the automorphism $g\to g^{q^i}$,
we shall write $M^{\phi}=M^{q^i}$. Using this notation,
the twisted tensor product of $M$ over $\FF_q$ is 
\[ M'=M\otimes M^{q}\otimes\cdots\otimes M^{q^{t-1}}. \]
Observe that as
\[ \PG(r^t-1,q^t)\simeq\PG(M',\FF_{q^t}) \]
we can regard the Segre product $\Sigma_{r^t}$ as
embedded in the latter projective space.
If we restrict our attention to the points of $\cV_{r,t}$,
we see that
for any $g\in G$ and $P\in\PG(r-1,q^t)$
\[ ({gP})^{\alpha}=\sigma([gP,g^qP^q,\ldots,g^{q^{t-1}}P^{q^{t-1}}])=
g\sigma([P,P^q,\ldots,P^{q^{t-1}}])=g{P}^{\alpha}. \]
This is to say that  $\PGL(r,q^t)$, as embedded in
$\PGL(r^t,q)$, stabilises $\cV_{r,t}$ and its action
on the points of the variety is
the same way as on those of $\PG(r-1,q^t)$.
For this reason,
we  can consider $\cV_{r,t}$ as a geometric realisation
of $\PG(r-1,q^t)$ in the twisted tensor product; in brief we shall call it
the \emph{twisted tensor embedding}
over $\FF_q$ of  $\PG(r-1,q^t)$.
In close analogy,
the image under $\alpha$ of a subgeometry
$\PG(r-1,q^s)$ of $\PG(r-1,q^t)$, where $s|t$, is
the twisted tensor embedding of the Veronese
variety $\cV(r-1,\frac{t}{s})$ defined
on the field $\FF_{q^s}$; this turns out
to be the complete intersection of
$\cV_{r,t}$ with a suitable
$\PG({{r-1+\frac{t}{s}}\choose{\frac{t}{s}}}^s-1,q)$; in particular,
for $s=1$, we get a Veronese variety $\cV(r-1,t)$ defined on
$\FF_{q}$; see \cite{LunardonNS} for  $s=1$
and $r=2$ and \cite{V} for the general case.
\par
The set $\mathcal{R} =\{ \Pi_P | P \in \PG(r-1,q)\}$ is a \emph{regulus}.
There 
are several equivalent descriptions for such a collection of spaces
contained in a Desarguesian spread; 
for our purposes, the most useful is the
following: 
suppose $\Sigma$ to be a $(r-1)$--subspace of $\PG(rt-1,q)$ such
that $\Sigma$ intersects every element of $\cS$ in at most one point;
then, the elements of $\cS$ with non-empty intersection with $\Sigma$
form a regulus $\mathcal{R}$.
\par
As mentioned before,  any $t+1$ points of
the
normal rational curve $\cV(1,t)$
are in general position, that is they span a $t$--dimensional
projective space; in \cite{V} it is proved that also any $t+1$
points of $\cV_{2,t}$ are in general position.
Recently, W. Kantor \cite{veron_indip}
has announced a proof of the same property for the Veronesean $\cV(r-1,t)$
with arbitrary $r$. Using a suitable
adaptation of the arguments he proposed
it is possible to generalise
the result of \cite{V} to $\cV_{r,t}$ for any $r$.
To this aim,
first we prove a suitably adapted version of a result in \cite{veron_indip}.

\begin{theorem}
\label{t:m}
Let $\Pi_0,\Pi_1,\ldots,\Pi_{t-1}$ be subspaces of $\PG(r-1,q^t)$ and
suppose that $P\in\PG(r-1,q^t)$ is not contained in any of them.
Then,
$P^{\alpha}$ is not contained in
$\langle
\Pi_0^{\alpha},\Pi_1^{\alpha},\dots,\Pi_{t-1}^{\alpha} \rangle$.
\end{theorem}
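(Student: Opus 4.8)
The plan is to separate $P^{\alpha}$ from the set $\Pi_0^{\alpha}\cup\cdots\cup\Pi_{t-1}^{\alpha}$ by a single hyperplane of the ambient space, which is obviously equivalent to the assertion. Observe first that, since $P\notin\Pi_j$, each $\Pi_j$ is a proper subspace of $\PG(r-1,q^t)$, so there is a hyperplane $H_j$ of $\PG(r-1,q^t)$ with $\Pi_j\subseteq H_j$ and $P\notin H_j$ (if $\Pi_j=\emptyset$, just take any hyperplane avoiding $P$); let $\ell_j$ be a linear form with equation $H_j:\ell_j=0$, so that $\ell_j$ vanishes on $\Pi_j$ and $\ell_j(P)\neq0$. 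Here I will also use that $\cV_{r,t}\subseteq\Omega$: a point of the subgeometry $\Omega$ lies in the $\FF_q$-span of a subset of $\Omega$ if and only if it lies in the $\FF_{q^t}$-span of that subset, so it is immaterial over which field the span $\langle\Pi_0^{\alpha},\ldots,\Pi_{t-1}^{\alpha}\rangle$ is computed, and I may argue throughout over $\FF_{q^t}$.

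Next I realise the required hyperplane explicitly in the twisted tensor model. Regard $\PG(r^t-1,q^t)$ as the projective space on $M\otimes M^{q}\otimes\cdots\otimes M^{q^{t-1}}$, so that for $Q=(x_0,\ldots,x_{r-1})$ the point $Q^{\alpha}$ is the decomposable tensor whose $f$-th coordinate is $\prod_{i=0}^{t-1}x_{f(i)}^{q^i}$, $f\in\fF$. Writing $\ell_j=\sum_k c_{j,k}X_k$, let $\ell_j^{q^j}=\sum_k c_{j,k}^{q^j}X_k$ denote the form obtained by applying the Frobenius $q^j$ to the coefficients, and set
\[
\Lambda:=\ell_0\otimes\ell_1^{q}\otimes\cdots\otimes\ell_{t-1}^{q^{t-1}}.
\]
The one step that needs care is to check that $\Lambda$ is a genuine linear functional on the $r^t$-dimensional twisted tensor space: its $j$-th factor is linear in the block of coordinates carrying the exponent $q^j$, and since the exponents $q^0,q^1,\ldots,q^{t-1}$ are pairwise distinct, $\Lambda$ pairs correctly against the monomial basis indexed by $\fF$. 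Thus $\Lambda$ defines a hyperplane of $\PG(r^t-1,q^t)$.

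Finally I evaluate. For any $Q$ with coordinate vector $x$,
\[
\Lambda(Q^{\alpha})=\prod_{j=0}^{t-1}\Bigl(\sum_k c_{j,k}^{q^j}x_k^{q^j}\Bigr)=\prod_{j=0}^{t-1}\Bigl(\sum_k c_{j,k}x_k\Bigr)^{q^j}=\prod_{j=0}^{t-1}\ell_j(x)^{q^j}.
\]
Hence $\Lambda(Q^{\alpha})=0$ whenever $Q\in H_0\cup\cdots\cup H_{t-1}$, in particular on every point of $\Pi_0^{\alpha}\cup\cdots\cup\Pi_{t-1}^{\alpha}$, so the hyperplane $\Lambda=0$ contains $\langle\Pi_0^{\alpha},\ldots,\Pi_{t-1}^{\alpha}\rangle$; on the other hand $\Lambda(P^{\alpha})=\prod_{j}\ell_j(P)^{q^j}\neq0$. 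Therefore $P^{\alpha}\notin\langle\Pi_0^{\alpha},\ldots,\Pi_{t-1}^{\alpha}\rangle$. In short, the argument is the twisted analogue of the classical ``product of $t$ linear forms'' trick for the Veronesean used in \cite{veron_indip}; the only genuine obstacle is the bookkeeping that shows the twisted decomposable functional $\Lambda$ is well defined on the twisted tensor space, after which the computation is immediate.
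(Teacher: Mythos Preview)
Your proof is correct and follows essentially the same strategy as the paper: both construct the product $\prod_{j}\ell_j^{q^j}$ of Frobenius-twisted linear forms, observe that via the parametrisation of $\cV_{r,t}$ this becomes a genuine linear functional on the twisted tensor space, and use the resulting hyperplane to separate $P^{\alpha}$ from the $\Pi_j^{\alpha}$. Your treatment is in fact more explicit than the paper's about why $\Lambda$ is well defined as a linear form on $M\otimes M^{q}\otimes\cdots\otimes M^{q^{t-1}}$, and you handle the $\FF_q$ versus $\FF_{q^t}$ span issue up front rather than at the end, but the argument is the same.
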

\begin{proof}
  For each $i=0,\ldots,t-1$,
  consider a linear map $\ell_i$ 
  vanishing on $\Pi_i$ but not in $P$. That is to say that
  $\ell_i=0$ is the equation of a hyperplane of $\PG(r-1,q^t)$
  containing the subspace $\Pi_i$ but not the point $P$.
  Clearly, for $\ell_i=\sum_{j=0}^{r-1}a_{ij}x_j$, we have
  $\ell_i^q=\sum_{j=0}^{r-1}a_{ij}^qx_j^q$.
  Let
  \[ L=\prod_{i=0}^{t-1}\ell_i^{q^{i-1}}. \]
  By construction, $L$ vanishes on
  $\Pi_0,\Pi_1,\ldots,\Pi_{t-1}$ but not in $P$. By the parametrisation of
  $\cV_{r,t}$ of \cite{V}, $L$ evaluated on $\Pi_i$ is the same as a
  $\FF_{q^t}$--linear function evaluated on
  $\Pi_i^{\alpha} \subset \cV_{r,t}$; hence, there
  exists a hyperplane $\Lambda$ of $\PG(r^t-1,q^t)$ containing
  $\langle \Pi_i^{\alpha} : i=0,1,\ldots,t-1 \rangle$  but not
  $P^{\alpha}$.
  It is well known that any
  hyperplane of $\PG(r^t-1,q^t)$ intersects a subgeometry $\PG(r^t-1,q)$
  in a  (possibly empty) subspace.
  This completes the proof. 
\end{proof}
The
case in which all of the subspaces reduce to 
a single projective point is of special interest for  the
geometry.
\begin{corollary}
\label{t:v}
Any $t+1$ points of $\cV_{r,t}$ are in general position.
\end{corollary}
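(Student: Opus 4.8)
The plan is to derive Corollary~\ref{t:v} directly from Theorem~\ref{t:m} by specialising the subspaces. Suppose $Q_0, Q_1, \ldots, Q_t$ are $t+1$ distinct points of $\cV_{r,t}$; by the injectivity of $\alpha$, write $Q_i = R_i^\alpha$ for pairwise distinct points $R_0, \ldots, R_t \in \PG(r-1,q^t)$. To show these $t+1$ points span a $t$-dimensional space, it suffices to show that none of them lies in the span of the other $t$; by symmetry we only need to treat $Q_t$, showing $Q_t = R_t^\alpha \notin \langle R_0^\alpha, \ldots, R_{t-1}^\alpha\rangle$.

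This is exactly the situation of Theorem~\ref{t:m} with each $\Pi_i := \{R_i\}$ a single point (a $0$-dimensional subspace) for $i = 0, \ldots, t-1$, and $P := R_t$. The only hypothesis to check is that $P = R_t$ is not contained in any of the $\Pi_i$, i.e.\ that $R_t \neq R_i$ for $i = 0, \ldots, t-1$; this holds precisely because the $R_i$ are pairwise distinct. Theorem~\ref{t:m} then yields $R_t^\alpha \notin \langle \Pi_0^\alpha, \ldots, \Pi_{t-1}^\alpha \rangle = \langle R_0^\alpha, \ldots, R_{t-1}^\alpha \rangle$, as required.

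Since the labelling of the $t+1$ points was arbitrary, the same argument shows that no point $Q_j$ lies in the span of the remaining ones, so $Q_0, \ldots, Q_t$ are linearly independent and hence span a $t$-dimensional projective subspace of $\PG(r^t-1,q)$; that is, they are in general position. I do not anticipate a genuine obstacle here: the content is entirely in Theorem~\ref{t:m}, and the corollary is just the case where every $\Pi_i$ is a point, the only subtlety being the bookkeeping that ``$t+1$ points in general position'' is equivalent to ``no one of them is in the span of the other $t$'', which follows because $t+1$ points always span a subspace of dimension at most $t$.
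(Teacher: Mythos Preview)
Your argument is correct and matches the paper's approach exactly: the paper states Corollary~\ref{t:v} without proof, presenting it as the immediate specialisation of Theorem~\ref{t:m} to the case where each $\Pi_i$ is a single point. Your write-up simply makes explicit the (routine) bookkeeping that the paper leaves to the reader.
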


\begin{corollary}
\label{c:tp2}
  Suppose $q>t$. Any set of $t+2$ dependent
  points of $\cV_{r,t}$
  is contained in the image under $\alpha$ of a
  subline $\PG(1,q) \subset \PG(r-1,q^t)$.
\end{corollary}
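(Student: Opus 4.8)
The plan is to reduce to the case $r=2$ by a collinearity argument based on Theorem~\ref{t:m}, and then to analyse the situation directly on $\cV_{2,t}$.

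First observe that, by Corollary~\ref{t:v}, any $t+1$ of the given $t+2$ points span a $t$-dimensional subspace; since the whole set is dependent it spans exactly a $t$-space $\Pi$, any $t+1$ of the points form a basis of $\Pi$, and hence in a non-trivial relation $\sum_{k=1}^{t+2}c_kP_k^{\alpha}=0$ every coefficient $c_k$ is non-zero, while the preimages $P_1,\dots,P_{t+2}\in\PG(r-1,q^t)$ are pairwise distinct. I would then prove that these preimages are collinear. Fixing three of them, $P_a,P_b,P_c$, apply Theorem~\ref{t:m} to the subspaces $\Pi_0=\langle P_a,P_b\rangle$ together with the $t-1$ remaining points viewed as singletons $\Pi_1,\dots,\Pi_{t-1}$, and to $P=P_c$: if $P_c\notin\langle P_a,P_b\rangle$ the theorem would give $P_c^{\alpha}\notin\langle\Pi_0^{\alpha},\Pi_1^{\alpha},\dots,\Pi_{t-1}^{\alpha}\rangle$, which is impossible since this subspace contains the span of the other $t+1$ images, i.e.\ $\Pi$, and $P_c^{\alpha}\in\Pi$. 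Thus $P_c\in\langle P_a,P_b\rangle$ for every triple, so all $P_k$ lie on a common line $\ell$ of $\PG(r-1,q^t)$; as $\ell^{\alpha}$ is a copy of $\cV_{2,t}$, this reduces the problem to $r=2$.

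Next, using the $3$-transitive action of $\PGL(2,q^t)$ on $\cV_{2,t}$ induced by monomial maps of the ambient space (which preserve linear dependence), normalise $P_1=\infty$, $P_2=0$, $P_3=1$; by the parametrisation of \cite{V}, $(1,\mu)^{\alpha}$ has $S$-coordinate $m_S(\mu):=\prod_{i\in S}\mu^{q^i}$ for $S\subseteq\{0,\dots,t-1\}$, while $\infty^{\alpha}=e_{\{0,\dots,t-1\}}$ and $0^{\alpha}=e_{\emptyset}$. The subline $\ell_0=\PG(1,q)$ through $P_1,P_2,P_3$ maps under $\alpha$ to a normal rational curve $\cV(1,t)$, each of whose points has $S$-coordinate depending only on $|S|$; since $q\ge t$ this curve spans the $t$-space $\Pi_0$ defined by the equations $\{y_S=y_{S'}:|S|=|S'|\}$, and a direct check shows $\cV_{2,t}\cap\Pi_0=\ell_0^{\alpha}$ (indeed $(1,\mu)^{\alpha}\in\Pi_0$ forces $m_{\{0\}}(\mu)=m_{\{i\}}(\mu)$ for all $i$, i.e.\ $\mu\in\FF_q$). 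It therefore suffices to prove $\Pi=\Pi_0$, equivalently that no $P_k$ is \emph{irrational}, that is, that $\mu_k\in\FF_q$ for all $k$.

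This last step is, in my view, the main obstacle. The key device is that subtracting two of the coordinate equations $\sum_k c_km_S(\mu_k)=0$ and $\sum_k c_km_{S'}(\mu_k)=0$ with $|S|=|S'|$ annihilates the contribution of every rational point — including $0$ and $\infty$, whose $S$-coordinate depends only on $|S|$ — leaving relations in the irrational $\mu_k$'s alone. Reading these off for $|S|=|S'|=1$ and iterating the Frobenius automorphism of $\FF_{q^t}/\FF_q$ produces a Moore-type linear system in the $c_k$'s whose solvability is governed by the $\FF_q$-span of the coefficients attached to the irrational points: when these are $\FF_q$-independent the system forces each irrational $\mu_k\in\FF_q$, a contradiction. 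One is then reduced to the degenerate case where they are $\FF_q$-dependent; there the same relations show the irrational $\mu_k$'s are linked by $\FF_q$-affine substitutions, and feeding this into the coordinate equations with $|S|\ge 2$ yields a non-trivial polynomial condition of degree at most $t$ on a single irrational parameter which, because $q>t$, can only hold if that parameter already lies in $\FF_q$. In every case there is no irrational point, so $\Pi=\Pi_0$ and the $t+2$ points lie on $\ell_0^{\alpha}=(\PG(1,q))^{\alpha}$, as claimed.
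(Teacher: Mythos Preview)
Your reduction to $r=2$ is correct and is exactly what the paper does: pick a line through two of the preimages as one of the $\Pi_i$'s, apply Theorem~\ref{t:m}, and conclude that all $t+2$ preimages lie on a common line of $\PG(r-1,q^t)$, hence on a copy of $\cV_{2,t}$. The paper then finishes in one line by quoting \cite[Lemma~2.5]{L2} (Lunardon): for $t<q$, any $t+2$ points of $\cV_{2,t}$ that are dependent but $t{+}1$ by $t{+}1$ independent come from a single regulus of the Desarguesian spread, i.e.\ from a subline $\PG(1,q)$. So the substantive difference is that you try to reprove Lunardon's lemma by a direct coordinate computation after normalising three points to $\infty,0,1$.

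That direct computation is where the gap is. Two points in particular:
\begin{itemize}
\item The phrase ``the $\FF_q$-span of the coefficients attached to the irrational points'' does not parse: the $c_k$ lie in $\FF_q$, so their $\FF_q$-span is just $\FF_q$ and carries no information. Whatever Moore-type dichotomy you have in mind (perhaps on the $\mu_k$'s or on the differences $\mu_k^{q}-\mu_k$) needs to be stated precisely before it can be checked.
\item More seriously, the concluding inference ``a non-trivial polynomial condition of degree at most $t$ on a single irrational parameter, because $q>t$, can only hold if that parameter already lies in $\FF_q$'' is not valid. A non-zero polynomial of degree $\le t$ over $\FF_q$ has at most $t$ roots in $\overline{\FF_q}$, but nothing forces those roots into $\FF_q$; and the expressions you obtain from the $|S|\ge 2$ equations are in fact polynomials in $\mu,\mu^{q},\dots,\mu^{q^{t-1}}$, whose genuine degree in $\mu$ involves $q$, not $t$. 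For instance, already for $t=3$ one has to combine the three Frobenius conjugates of the $|S|=2$ equation and factor carefully to force $(c_4+c_5)\mu_4^{q}+c_3=0$, and only then conclude $\mu_4\in\FF_q$; the bound $q>t$ plays no role in that step. Your sketch does not supply this mechanism in general.
\end{itemize}
So the first two paragraphs match the paper; for the $r=2$ endgame you should either cite \cite[Lemma~2.5]{L2} as the paper does, or replace the last paragraph with an argument that actually exploits the full system of $|S|\ge 2$ equations and their Frobenius conjugates rather than a degree count.
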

\begin{proof}
Take $t+2$ distinct points $P_0,P_1,\ldots,P_{t},P$
forming a dependent system, and let
\[ \Pi_i:=P_i, \text{for $i=0,\ldots,t-2$},\qquad
\Pi_{t-1}=\langle P_{t-1}, P_t \rangle. \]
If it were
$P \notin \Pi_{t-1}$, then, by Theorem \ref{t:m},
$ P^{\alpha}\notin \langle P_0^{\alpha},P_1^{\alpha},\dots,\Pi_{t-1}^{\alpha}
\rangle$.
However, by hypothesis,
$P^{\alpha} \in \langle
P_0^{\alpha},P_1^{\alpha},\dots,P_{t-1}^{\alpha},P_t^{\alpha} \rangle$
and $\langle
P_0^{\alpha},P_1^{\alpha},\dots,P_{t-2}^{\alpha} ,P_{t-1}^{\alpha},P_t^{\alpha}
\rangle \subseteq \langle
P_0^{\alpha},P_1^{\alpha},\dots,\Pi_{t-1}^{\alpha} \rangle$ --- a
contradiction.  It follows that
the $t+2$ points under consideration must all belong
to the same line $\PG(1,q^t)\subseteq\PG(r-1,q^t)$.
In particular, their image is contained in a $\cV_{2,t} \subseteq
\cV_{r,t}$.  By \cite[Lemma 2.5]{L2}, for $t<q$, any $t+2$ linearly
dependent points of
$\cV_{2,t}$ which are $t+1$ by $t+1$ independent
are the image of
elements of the same regulus in the
Desarguesian $(t-1)$--spread of $\PG(2t-1,q)$; it follows that
they are contained in the image under $\alpha$
of the same subline $\PG(1,q)$.
\end{proof}

\section{The code $\cC_{r,t}$ and its automorphisms}
\label{s:c}
\begin{definition}
\label{d:crt}
  Let $q$ be any prime power.
  For any two integers $r,t$ with $t<q$,
  denote by $\cC_{r,t}$ the code whose parity-check matrix
  $H$ has as columns the coordinate vectors of the points of
  the variety $\cV_{r,t}$.
\end{definition}
\begin{remark}
In Definition \ref{d:crt} we did not specify the field over
which the code is defined.
Clearly,
$\cC_{r,t}$ arises as a subspace of $\FF_{q^t}^{r^t}$ ---
however, by the considerations contained in the previous paragraph,  it can
be more conveniently be regarded  as defined over $\FF_q$,
up to  a suitable collineation; this is what we shall do.
\end{remark}
\begin{remark}
\label{r:01}
The order of the columns in $H$ is 
arbitrary, but once chosen, it 
determines an order for the points of $\cV_{r,t}$.
In particular Definition \ref{d:crt} for $\cC_{r,t}$ makes sense only
up to code equivalence, as a permutation of the columns is not usually
an automorphism of the code.
It will be seen in the latter part of this section, however, that
the most useful orders for the code $\cC_{r,t}$ are those induced by
the action of a cyclic collineation group of $\PG(r-1,q^t)$ ---
either a Singer cycle or an affine Singer cycle.
\end{remark}
In view of Remark \ref{r:01}, it is possible to give the following
definition.
\begin{definition}
The \emph{support} of a word $\bfw\in\cC_{r,t}$ is the set of points
of the variety $\cV_{r,t}$ corresponding to the non-zero components
of $\bfw$.
\end{definition}
In order to avoid degenerate cases, the condition
$t<q$ shall always be silently assumed in the remainder of the paper.
\begin{theorem}
\label{t:c0}
  The code $\cC_{r,t}$ has length $n=\frac{(q^{rt}-1)}{(q^t-1)}$
  and parameters $[n,n-r^t,t+2]$.
\end{theorem}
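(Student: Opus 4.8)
The length is immediate: the map $\alpha$ is injective, so the columns of $H$ are in bijection with the points of $\PG(r-1,q^t)$ and their number is $n=(q^{rt}-1)/(q^t-1)$. Since $H$ has $r^t$ rows, $\dim\cC_{r,t}=n-\rank H$, so it remains to prove that $\rank H=r^t$ and that the minimum weight equals $t+2$; we assume $r\ge 2$, the statement being vacuous for $r=1$. For the rank I would show that $\cV_{r,t}$ spans $\PG(r^t-1,q^t)$. Fixing a representative $(x_0,\dots,x_{r-1})$ of a point $P$, the homogeneous coordinates of $P^{\alpha}$ are the monomials $\prod_{i=0}^{t-1}x_{f(i)}^{q^i}$ with $f$ ranging over $\fF$; they all have total degree $\sum_{i=0}^{t-1}q^i$, and the exponent of $x_j$ in the monomial attached to $f$ is $\sum_{i\colon f(i)=j}q^i$, a number whose base-$q$ expansion has digits $0$ or $1$ supported on the positions $0,\dots,t-1$, hence smaller than $q^t$, and from which $f$ can be read off. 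So distinct $f$'s yield distinct reduced monomials, which are $\FF_{q^t}$-linearly independent as functions on $\FF_{q^t}^{\,r}$, and therefore no hyperplane of $\PG(r^t-1,q^t)$ contains $\cV_{r,t}$. Since we may (and, following the remark after Definition~\ref{d:crt}, do) choose the columns of $H$ with entries in $\FF_q$, the rank of $H$ over $\FF_q$ coincides with its rank over $\FF_{q^t}$, and the latter is $r^t$ because the columns span $\FF_{q^t}^{\,r^t}$. Thus $\dim\cC_{r,t}=n-r^t$.

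Next I would prove $d\ge t+2$. A non-zero word $\bfw\in\cC_{r,t}$ satisfies $\sum_j w_j P_j^{\alpha}=0$, so the points of $\cV_{r,t}$ lying in the support of $\bfw$ are linearly dependent, and every coefficient occurring in this dependence is non-zero. By Corollary~\ref{t:v} any $t+1$ points of $\cV_{r,t}$ are in general position, hence linearly independent; since $n\ge 1+q^t>t+1$, any set of at most $t+1$ points of $\cV_{r,t}$ is independent as well, so the support of $\bfw$ has at least $t+2$ elements. Hence $d\ge t+2$.

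For the reverse inequality I would exhibit a word of weight exactly $t+2$. Choose a subline $\PG(1,q)\subseteq\PG(r-1,q^t)$. By the description of images of subgeometries recalled in Section~\ref{s:v}, $\alpha$ maps it onto a normal rational curve $\cV(1,t)$ which spans a subspace $\PG(t,q)\subseteq\Omega$ and has $q+1\ge t+2$ points (here $t<q$ is used). Any $t+2$ of these points lie in that $\PG(t,q)$, so their coordinate vectors are $\FF_q$-linearly dependent, while by Corollary~\ref{t:v}, or by the classical general-position property of a normal rational curve, any $t+1$ of them are independent; therefore the dependence among the $t+2$ points is unique up to a scalar and all of its coefficients are non-zero. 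This dependence defines a word of $\cC_{r,t}$ supported exactly on those $t+2$ points, so $d\le t+2$, and combining with the previous step gives $d=t+2$.

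The only step carrying genuinely new content is the rank computation, that is, the non-degeneracy of $\cV_{r,t}$ inside the subgeometry $\Omega$; granted Corollary~\ref{t:v}, the value of $d$ is essentially bookkeeping (with Corollary~\ref{c:tp2} available for the subsequent, finer task of describing all words of minimum weight). The point that most needs care --- in the rank argument and in invoking the general-position results --- is the descent between $\FF_{q^t}$ and $\FF_q$ through $\Omega$, which I would spell out explicitly rather than leave implicit.
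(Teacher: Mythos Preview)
Your proof is correct and follows essentially the same route as the paper: length from the bijection with $\PG(r-1,q^t)$, dimension from non-degeneracy of $\cV_{r,t}$, the lower bound $d\ge t+2$ from Corollary~\ref{t:v}, and the upper bound from a subline $\PG(1,q)$ mapping to a normal rational curve in a $t$-dimensional subspace. The only difference is that you supply an explicit monomial-independence argument for $\rank H=r^t$, whereas the paper simply asserts that $\cV_{r,t}$ is not contained in any hyperplane, this being known from the references cited in Section~\ref{s:v}.
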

\begin{proof}
  By construction
  \[ n=|\cV_{r,t}|=|\PG(r-1,q^t)|. \]
  As $\cV_{r,t}\subseteq\PG(r^t-1,q)$ is
  not contained in any hyperplane, the rank of the $r^t\times n$ matrix $H$
  is maximal and, consequently, the dimension of the code
  is $n-r^t$.
  Corollary \ref{t:v} guarantees that any $t+1$ columns of $H$
  are linearly independent; thus, by
  \cite[Theorem 10, page 33]{MS}  the
  the minimum distance of $\cC_{r,t}$
  is always at least $d\geq t+2$.
\par
  The image under $\alpha$ of the canonical subline
  $\PG(1,q)$ of $\PG(r-1,q^t)$
  determines a submatrix $H'$ of $H$
  with many repeated rows; indeed, the points represented in $H'$
  constitute a normal rational curve contained in a subspace of
  dimension $t$; see \cite[Theorem 2.16]{L2}.
  It follows that any $t+2$ such points are necessarily dependent.
  Hence, the minimum distance is exactly $t+2$.
\end{proof}
\begin{remark}
The code $\cC_{2,3}$ is also constructed in \cite[Theorem 2]{B}.
\end{remark}

We now characterise the words of minimum weight in $\cC_{r,t}$.
\begin{theorem}
\label{t:param}
  A word $\bfw\in\cC_{r,t}$ has minimum weight if, and only if,
  its support consists of $t+2$
  points  contained in the image of a subline $\PG(1,q)$.
\end{theorem}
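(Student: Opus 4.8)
The plan is to deduce the theorem directly from Corollary~\ref{c:tp2} and Theorem~\ref{t:c0}; the substantive work has in fact already been carried out there. Recall from Theorem~\ref{t:c0} that the minimum distance is $d=t+2$, so a word of minimum weight is a nonzero word of $\cC_{r,t}$ whose support has cardinality exactly $t+2$.

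For the necessity, I would let $\bfw\in\cC_{r,t}$ have minimum weight and let $S$ be its support, $|S|=t+2$. Writing $H$ for the parity-check matrix, the $t+2$ columns of $H$ indexed by $S$ satisfy the linear relation whose coefficients are the entries of $\bfw$ on $S$; these entries are all nonzero by the very definition of support, so the $t+2$ points of $\cV_{r,t}$ belonging to $S$ form a linearly dependent set with all coefficients nonzero. Since $t<q$, Corollary~\ref{c:tp2} applies and produces a subline $\PG(1,q)\subset\PG(r-1,q^t)$ whose image under $\alpha$ contains $S$, as required.

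The sufficiency is almost immediate: if $\bfw\in\cC_{r,t}$ is nonzero with support of size $t+2$, then $\operatorname{wt}(\bfw)=t+2=d$, so $\bfw$ has minimum weight, whatever its support. To check that the characterisation is not vacuous --- that every $(t+2)$-subset of the image of a subline really is the support of some word --- I would argue as in the proof of Theorem~\ref{t:c0}: $\alpha$ carries a subline $\PG(1,q)$ onto a normal rational curve $\cV(1,t)$ spanning a $t$-dimensional subspace of $\Omega=\PG(r^t-1,q)$, any $t+1$ of whose points are in general position. As $q>t$, this curve has $q+1\geq t+2$ points; any $t+2$ of them span only that $t$-dimensional subspace, hence admit a linear dependency that is unique up to a scalar and has all $t+2$ coefficients nonzero (otherwise some $t+1$ of the points would be dependent). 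Reading this relation as a vector of $\FF_q^n$ that is zero outside those $t+2$ coordinates yields a word of $\cC_{r,t}$ with the prescribed support.

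I do not expect a genuine obstacle here: the theorem is essentially a repackaging of Corollary~\ref{c:tp2} together with the normal rational curve description of $\alpha(\PG(1,q))$. The only points deserving a line of care are that a minimum-weight relation automatically has full support, so that Corollary~\ref{c:tp2} applies verbatim, and the identification of ``linearly dependent columns of $H$ over $\FF_q$'' with ``linearly dependent points of $\cV_{r,t}$'', which is harmless since $\cV_{r,t}\subseteq\PG(r^t-1,q)$ and the whole discussion takes place over $\FF_q$.
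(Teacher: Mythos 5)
Your proposal is correct and follows essentially the same route as the paper: the converse direction is exactly the application of Corollary~\ref{c:tp2} to the $t+2$ dependent columns of $H$ indexed by the support, and the direct direction rests on the normal rational curve description of $\alpha(\PG(1,q))$ already used in the proof of Theorem~\ref{t:c0}. Your added remarks (that the dependency among $t+2$ points of the curve has all coefficients nonzero, so the characterisation is not vacuous) merely spell out details the paper leaves implicit.
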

\begin{proof}
  By the proof of Theorem \ref{t:c0}, the image of any $t+2$ points in
  a subline $\PG(1,q)$ gives the support of a codeword of minimum weight.
  \par
  Conversely, let $\bfw\in\cC_{r,t}$ be of weight $t+2$; then,
  the support of $\bfw$ consists of $t+2$ dependent points of
  $\cV_{r,t}$. By Corollary \ref{c:tp2} these are contained in
  the image under $\alpha$ of a subline. The result follows.
\end{proof}

We shall now introduce a second class of codes,
also a generalisation of a construction in \cite{B}.

\begin{definition}
\label{d:crts}
  Let $q$ be any prime power.
  For any three integers $r,t,s$ with $t<q$, $1 < s < t$ and $s|t$,
  denote by $\cC_{r,t}^{(s)}$ the code whose parity-check matrix
  $K$ has as columns the coordinate vectors of the points
  of the subvariety of $\cV_{r,t}$
  image of the points of a subgeometry $\PG(r-1,q^s)$, 
  that is the
  twisted tensor  embedding of a Veronese variety
  $\cV(r-1,\frac{t}{s})$ defined over $\FF_{q^s}$.
\end{definition}
\begin{remark}
\label{r:sm}
The matrix $K$ is a submatrix of the matrix $H$.
\end{remark}

\begin{theorem}
\label{t:c2}
  Let $m=m=\frac{(q^{rs}-1)}{(q^s-1)}$ and suppose $t<m-1$.
  Then, the code $\cC_{r,t}^{(s)}$ has length $m$
  and parameters $[m,m-{{r-1+\frac{t}{s}}\choose{\frac{t}{s}}}^s,t+2]$.
\end{theorem}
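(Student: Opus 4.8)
The plan is to follow the template of Theorem~\ref{t:c0} almost verbatim, replacing $\cV_{r,t}$ by its subvariety $\cV':=\alpha(\PG(r-1,q^s))$ and $\PG(r-1,q^t)$ by $\PG(r-1,q^s)$. First I would record the length: by construction the columns of $K$ are indexed by the points of $\PG(r-1,q^s)$, so $m=|\PG(r-1,q^s)|=\frac{q^{rs}-1}{q^s-1}$, which is the quantity called $m$ in the statement. Next I would determine the redundancy. The subvariety $\cV'$ spans, inside $\PG(r^t-1,q)$, exactly the subspace $\PG\bigl({{r-1+t/s}\choose{t/s}}^s-1,q\bigr)$ identified in Section~\ref{s:v} as the ambient space of the twisted tensor embedding of the Veronesean $\cV(r-1,t/s)$ over $\FF_{q^s}$; hence $\rank K={{r-1+t/s}\choose{t/s}}^s$, and the dimension of $\cC_{r,t}^{(s)}$ is $m-{{r-1+t/s}\choose{t/s}}^s$, as claimed. (Here one should be slightly careful: $K$ is a submatrix of $H$ obtained by both selecting columns and working in the subspace spanned by them, so the relevant rank is that of the restricted matrix, not of $H$ itself; this is exactly the content of Remark~\ref{r:sm} together with the spanning statement for the Veronese embedding.)

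For the minimum distance I would argue $d\ge t+2$ and $d\le t+2$ separately. The lower bound is immediate: since $\cV'\subseteq\cV_{r,t}$, Corollary~\ref{t:v} applies verbatim to the points of $\cV'$, so any $t+1$ columns of $K$ are linearly independent, and \cite[Theorem~10, p.~33]{MS} gives $d\ge t+2$. For the upper bound I would exhibit a dependent set of $t+2$ points of $\cV'$. The canonical subline $\PG(1,q)\subset\PG(r-1,q^s)$ lies inside $\PG(r-1,q^s)$ (it is the same subline used in the proof of Theorem~\ref{t:c0}, which already sits in every $\PG(r-1,q^{s})$ with $s\ge 1$), and its image under $\alpha$ is a normal rational curve lying in a projective space of dimension $t$ — note that the degree of the map on a $\PG(1,q)$ is still governed by $t$, the number of twisted factors, not by $s$. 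The hypothesis $t<m-1$ guarantees that this subline has at least $t+2<m$ points, so such a dependent $(t+2)$-set of columns genuinely occurs in $K$; hence $d\le t+2$, and combining the two bounds gives $d=t+2$.

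The step I expect to be the only real obstacle is the rank/dimension computation, i.e.\ justifying carefully that $\cV'$ spans precisely a $\PG\bigl({{r-1+t/s}\choose{t/s}}^s-1,q\bigr)$ and not a smaller subspace. The spanning itself should come from the fact, recalled in Section~\ref{s:v} and proved in \cite{V} (and \cite{LunardonNS} for $r=2$), that the twisted tensor embedding of $\cV(r-1,t/s)$ over $\FF_{q^s}$ is the complete intersection of $\cV_{r,t}$ with that subspace; nondegeneracy of a Veronesean in its natural ambient space then gives that $\cV'$ is not contained in any hyperplane of it. The rest — the length formula, the MDS-type lower bound on $d$, and the subline construction for the upper bound — is a direct transcription of the proof of Theorem~\ref{t:c0}, so I would present it compactly and refer back to that proof wherever the arguments coincide.
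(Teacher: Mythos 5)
Your proposal is correct and follows essentially the same route as the paper's proof: length from $|\PG(r-1,q^s)|$, rank of $K$ equal to $\binom{r-1+t/s}{t/s}^s$ via the spanning result for the twisted tensor embedding of the Veronesean from \cite{V}, the bound $d\ge t+2$ from Corollary \ref{t:v} applied to the points of the subvariety, and $d\le t+2$ from the image of a subline $\PG(1,q)\subset\PG(r-1,q^s)$, whose image is a normal rational curve in a $t$-dimensional space. One minor remark: the fact that the subline supplies $t+2$ dependent columns rests on the standing assumption $t<q$ (the subline has $q+1\ge t+2$ points), not on the hypothesis $t<m-1$ as you state, but this does not affect the validity of the argument.
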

\begin{proof}
  By construction
  \[ m=|\PG(r-1,q^s)|. \]
  By \cite[Theorem 2]{V}, the subvariety under consideration
  spans a space of rank ${{r-1+\frac{t}{s}}\choose{\frac{t}{s}}}^s$;
  hence, the rank of
  $K$ is ${{r-1+\frac{t}{s}}\choose{\frac{t}{s}}}^s$; consequently,
  the dimension of the code is
  $m-{{r-1+\frac{t}{s}}\choose{\frac{t}{s}}}^s$.
  The variety under consideration is a linear subvariety of $\cV_{r,t}$,
  that is a section of $\cV_{r,t}$ with a suitable subspace.
  Therefore, by Corollary \ref{t:v},
  any $t+1$ of its columns are linearly independent --- this
  shows that the minimum distance of the code is always at least $t+2$.
  On the other hand, since $\PG(r-1,q^s)$ contains the points
  of a subline $\PG(1,q)$, there are also examples of $t+2$ columns
  which are linearly dependent; thus the minimum distance
  is exactly $t+2$.
\end{proof}
\begin{remark}
  Both the codes $\cC_{r,s}$ and $\cC_{r,t}^{(s)}$
  correspond to a $\FF_q$--representation of $\PG(r-1,q^s)$;
  as such they have the same length. They clearly
  differ in their minimum distance $d$, larger
  in the case of the latter code.
  Indeed, the construction of Theorem \ref{t:c2} might be used to
  produce codes over $\FF_q$ with prescribed minimum distance and
  length. As a way to 
  compare the two codes more in detail, consider the
  function
  \[ \eta(\cC)=(d-1)/(n-k). \]
  By Singleton bound, $0<\eta(\cC)\leq 1$ for
  any code, and $\eta(\cC)=1$ if and only if the code is maximum
  distance separable (MDS). Thus, $\eta$
  provides an insight on the cost in redundancy per error (either
  detected or corrected).
  Under this criterion, in general,
  the codes $\cC_{r,s}$ perform much better than
  $\cC_{r,t}^{(s)}$.
  For example, consider the case $r=3$, $s=3$ and $t=6$.
  Then, $\eta(\cC_{3,3})=0.14$, while $\eta(\cC_{3,6}^{(3)})=0.032$.
  For reference, observe that $\eta(\cC_{3,6})=0.10$.
\end{remark}

An automorphism of a linear code $\cC$ is a
isometric linear transformation of $\cC$ --- in other words,
a linear transformation
of $\cC$ preserving the weight of every word.
The set of
all the automorphisms of a code is a group, denoted as $\Aut\cC$.
We shall now focus on automorphisms of a restricted form.
\begin{definition}
  An automorphism $\theta$ of $\cC$ is called \emph{monomial}
  if it is induced by a matrix which has exactly one non-zero
  entry in each row and in each column.
\end{definition}
It is straightforward to see that any monomial transformation
$\cC\to\cC$ is weight preserving and, thus, an automorphism;
clearly, there might also be automorphisms which are not monomial.
\begin{theorem}
\label{t:pgl}
  Any collineation $\gamma\in \PgL(r,q^t)$ lifts to a
  monomial automorphism
  of $\cC_{r,t}$. In particular, there  is a group
  $G\simeq \PgL(r,q^t)$ such that
  \[ G\leq\Aut\cC_{r,t}. \]
\end{theorem}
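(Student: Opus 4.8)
The plan is to show that the natural action of $\PgL(r,q^t)$ on $\PG(r-1,q^t)$, transported via the embedding $\alpha$ to the point set of $\cV_{r,t}$, is realised by monomial matrices acting on $\FF_q^{r^t}$, and that this action preserves the column space of $H$, hence the code $\cC_{r,t}$. First I would recall the computation already displayed in Section~\ref{s:v}: for $g \in \PGL(r,q^t)$ one has $(gP)^\alpha = g\, P^\alpha$, where on the right $g$ acts through its image in $\PGL(r^t,q)$ coming from the twisted tensor product $M' = M\otimes M^q \otimes \cdots \otimes M^{q^{t-1}}$. Thus the linear map $\hat g$ on $\FF_{q^t}^{r^t}$ induced by $g\otimes g^q\otimes\cdots\otimes g^{q^{t-1}}$ permutes the points of $\cV_{r,t}$ exactly as $g$ permutes the points of $\PG(r-1,q^t)$; since $\hat g$ fixes the subgeometry $\Omega = \PG(r^t-1,q)$ containing $\cV_{r,t}$, it restricts to an $\FF_q$-linear map of $\FF_q^{r^t}$. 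The Frobenius generator of the $\Gamma$-part of $\PgL$ is handled analogously: the $\FF_q$-linear collineation $v\mapsto v^q$ of $\PG(r-1,q^t)$ lifts to the $\FF_q$-semilinear map permuting the tensor factors cyclically, and this again fixes $\Omega$ pointwise-compatibly and induces an $\FF_q$-linear permutation of the points of $\cV_{r,t}$; combining, every $\gamma\in\PgL(r,q^t)$ yields an $\FF_q$-linear map $\hat\gamma$ of $\FF_q^{r^t}$ that permutes the columns of $H$ (up to nonzero scalars, since the columns are only determined projectively).

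The key step is then to pass from ``$\hat\gamma$ permutes the columns of $H$ projectively'' to ``$\gamma$ induces a monomial automorphism of $\cC_{r,t}$.'' If $H$ has columns $h_1,\dots,h_n$ indexed by the points of $\cV_{r,t}$, then $\hat\gamma$ sends the $j$-th column to $\lambda_j h_{\pi(j)}$ for some permutation $\pi$ of $\{1,\dots,n\}$ and nonzero scalars $\lambda_j \in \FF_q$; equivalently $\hat\gamma H = H P$, where $P$ is the monomial matrix with entries $P_{\pi(j),j} = \lambda_j$. Since $\hat\gamma$ is invertible, a vector $\bfw \in \FF_q^n$ satisfies $H\bfw = 0$ if and only if $\hat\gamma H \bfw = HP\bfw = 0$, i.e.\ if and only if $P\bfw \in \cC_{r,t}$. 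Hence $\bfw \mapsto P^{-1}\bfw$ maps $\cC_{r,t}$ to $\cC_{r,t}$, and $P^{-1}$ is again monomial; therefore $\gamma$ lifts to the monomial automorphism given by $P^{-1}$. Running this over all $\gamma$ gives a homomorphism $\PgL(r,q^t) \to \Aut\cC_{r,t}$ landing in the monomial subgroup.

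Finally I would check injectivity of this homomorphism, so as to conclude $G \simeq \PgL(r,q^t)$. If $\gamma$ lifts to the identity automorphism of $\cC_{r,t}$, then the associated monomial matrix $P$ must be scalar (it centralises $\cC_{r,t}$, which spans $\FF_q^n$ since its dual $H$ has full rank by Theorem~\ref{t:c0} — one needs here that $n > r^t$ so that $\cC_{r,t}\neq 0$, which holds under $t<q$), so $\hat\gamma$ fixes every point of $\cV_{r,t}$; since $\cV_{r,t}$ spans $\Omega$ and the action on $\cV_{r,t}$ is equivalent to the faithful $2$-transitive action of $\PgL(r,q^t)$ on the elements of the spread $\cS$ (as recalled from \cite{LunardonNS}), this forces $\gamma = \mathrm{id}$. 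The main obstacle is the bookkeeping in the first paragraph: one must be careful that the cyclic permutation of tensor factors induced by the Frobenius truly lands in the subgeometry $\Omega$ in an $\FF_q$-linear — as opposed to merely $\FF_q$-semilinear — fashion when restricted to coordinates of points of $\cV_{r,t}$, and that the scalars $\lambda_j$ really lie in $\FF_q^\times$ rather than $\FF_{q^t}^\times$; both follow from the explicit parametrisation of $\cV_{r,t}$ recalled from \cite{V}, but they are the points that require genuine verification rather than formal manipulation.
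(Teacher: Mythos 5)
Your first two paragraphs are essentially the paper's own proof, recast in matrix form: the lifted collineation sends each column $H_j$ to $\lambda_j H_{\pi(j)}$ with $\lambda_j\in\FF_q^\times$, and from $\hat\gamma H=HP$ one concludes that the monomial matrix $P$ (equivalently $P^{-1}$) preserves $\ker H=\cC_{r,t}$; this is exactly the computation $\mathbf{0}=\sum_i H_iw_i=\sum_j H_j[\gamma,j]^{-1}w_{\widetilde\gamma(j)}$ in the paper, and your caveats about the scalars lying in $\FF_q^\times$ and about the semilinear ($\Gamma$) part are the same points the paper settles by working inside the subgeometry $\Omega$ via the parametrisation of \cite{V}.

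The flaw is in your third paragraph. You claim that if $\gamma$ lifts to the identity on $\cC_{r,t}$ then $P$ must be scalar because ``$\cC_{r,t}$ spans $\FF_q^n$ since its dual $H$ has full rank''. This is false: full rank of $H$ means the dual code has dimension $r^t$, so $\cC_{r,t}$ has dimension $n-r^t<n$ and is a proper subspace; it cannot span $\FF_q^n$. A monomial matrix can perfectly well fix a proper subspace pointwise without being scalar (the fixed space of a monomial matrix has dimension equal to the number of cycles of its permutation part with scalar product $1$, which can easily exceed $n-r^t$ a priori), so the step ``identity on $\cC_{r,t}$ $\Rightarrow$ $P$ scalar'' is not justified by a dimension or spanning argument. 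To be fair, the paper itself leaves the isomorphism $G\simeq\PgL(r,q^t)$ implicit, but since you chose to prove it you need a correct argument: for instance, use that the permutation action of $\PgL(r,q^t)$ on the columns (i.e.\ on the points of $\cV_{r,t}$, equivalently of $\PG(r-1,q^t)$) is faithful, and then show the restriction to $\cC_{r,t}$ is still faithful by exhibiting, for any $\gamma\neq\mathrm{id}$ moving a point $Q$, a minimum-weight codeword (supported on $t+2$ points of the image of a subline through $Q$, as in Theorem \ref{t:param}) whose support is not stabilised by the induced permutation; such a codeword is not fixed by the induced map, so $\gamma$ does not act trivially on the code. As written, the injectivity step is a genuine gap, although it does not affect the first assertion of the theorem, which you prove correctly.
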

\begin{proof}
  Let $H_0,H_1,\ldots,H_{n-1}$ be the columns of $H$, the parity check matrix
  of $\cC_{r,t}$, and suppose $\gamma\in \PgL(r,q^t)$.
  Take $\bfw=(w_0,\ldots,w_{n-1})\in\cC$.
  As $\gamma$ acts on the points of the variety $\cV_{r,t}$
  as a permutation group,  there exists a
  permutation $\widetilde{\gamma}$ of $I=\{0,\ldots,n-1\}$
  and elements $[\gamma,i]\in\FF_q$ with $i\in I$ such that
  $\gamma(H_i)=[\gamma,i]H_{\widetilde{\gamma}(i)}$ as vectors;
  hence
  \[ H_i= [\gamma,\widetilde{\gamma}^{-1}(i)]^{-1}H_{\widetilde{\gamma}^{-1}(i)}. \]
  In particular, 
  \[ \mathbf{0}=\sum_i H_iw_i=\sum_i
  [\gamma,\widetilde{\gamma}^{-1}(i)]^{-1}H_{\widetilde{\gamma}^{-1}(i)}w_i=
  \sum_j H_j [\gamma,j]^{-1}w_{\widetilde{\gamma}(j)}. \]
  Thus, $\gamma$ induces a monomial transformation
  $\hat{\gamma}:\cC\to\cC$.
\end{proof}

We now focus our attention on the special case of cyclic automorphisms
and constacyclic codes.
\begin{definition}
A $q$--ary code $\cC$ is \emph{constacyclic} if there exists
$\beta\in\FF_q$ such that for any $\bfw=(w_0\,w_1\,\ldots w_n)\in\cC$,
\[ {\bfw}^{\rho}:=(\beta w_n\,w_0\,\ldots\,w_{n-1})\in\cC. \]
\end{definition}
Constacyclic codes have been introduced in \cite{BA} as
a generalisation of cyclic codes; for some of their properties see,
for instance, \cite{RZ}.
In \cite{B} it is shown that the codes $\cC_{2,3}$ are constacyclic.
Here, we extend the result to all $\cC_{r,t}$.
\par
Observe first that whenever there is a cyclic
group acting regularly on the columns of the parity check matrix of
a code, then the code is cyclic.
\begin{corollary}
\label{l:cyc}
Let $\cC$ be a code of length $\ell$ with parity-check matrix $H$.
Suppose that
there is a cyclic group  acting regularly on the
columns of $H$. Then, $\cC$ is cyclic.
\end{corollary}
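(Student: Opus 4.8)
The plan is to reduce the statement to a transparent fact about $q$-ary cyclic codes, namely that a code is cyclic precisely when it is invariant under the coordinate shift, and that this shift can be read off from a regular cyclic action on the columns of a parity-check matrix. First I would fix a generator $\tau$ of the cyclic group acting regularly on the $\ell$ columns $H_0,\dots,H_{\ell-1}$ of $H$; regularity means that, after relabelling, $\tau$ sends $H_i$ to $H_{i+1}$ with indices read modulo $\ell$ (there is no scalar factor here, unlike in Theorem \ref{t:pgl}, because the columns are literally permuted). So I may assume without loss of generality that the columns are indexed so that $\tau(H_i)=H_{i+1 \bmod \ell}$.

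Next I would take an arbitrary word $\bfw=(w_0,w_1,\dots,w_{\ell-1})\in\cC$, so that $\sum_i H_i w_i = \mathbf 0$, and show that the cyclic shift $\bfw'=(w_{\ell-1},w_0,\dots,w_{\ell-2})$ also lies in $\cC$. Writing $\bfw'=(w'_0,\dots,w'_{\ell-1})$ with $w'_j=w_{j-1 \bmod \ell}$, the verification is the reindexing
\[ \sum_{j} H_j w'_j = \sum_j H_j w_{j-1\bmod\ell} = \sum_i H_{i+1\bmod\ell} w_i = \sum_i \tau(H_i) w_i. \]
Since $\tau$ is realised by an invertible linear map on $\FF_q^\ell$ (it permutes a basis, or at least permutes the columns of $H$), applying $\tau$ to the identity $\sum_i H_i w_i=\mathbf 0$ gives $\sum_i \tau(H_i)w_i=\mathbf 0$; hence $\sum_j H_j w'_j=\mathbf 0$ and $\bfw'\in\cC$. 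This is exactly the defining property of a cyclic code, so $\cC$ is cyclic.

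The only point requiring a little care — and what I expect to be the main (mild) obstacle — is the bookkeeping around \emph{regular}: one must argue that a regular action of a cyclic group on the column set is, up to the choice of a cyclic ordering of the columns, the same as the standard shift on $\mathbb Z/\ell\mathbb Z$, and in particular that the group has order exactly $\ell$ and a single orbit. Once the columns are ordered along an orbit of a chosen generator, everything else is the routine reindexing above. One should also note, as a remark consistent with Remark \ref{r:01}, that this ordering is precisely the kind of column order that makes $\cC_{r,t}$ cyclic rather than merely constacyclic, so the corollary is the abstract mechanism underlying the passage from $\PgL(r,q^t)$-monomial automorphisms to genuinely (consta)cyclic structure in the concrete codes that follow.
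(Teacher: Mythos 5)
Your proof is correct and is essentially the paper's own argument: the paper simply invokes the computation of Theorem \ref{t:pgl} with all scalars $[\gamma,j]=1$, which is precisely your reindexing of $\sum_i H_i w_i=\mathbf 0$ after ordering the columns along an orbit of the generator. Two minor points of hygiene: $\tau$ acts as an invertible linear map on the ambient space containing the columns (of dimension equal to the number of rows of $H$), not on $\FF_q^{\ell}$, and it is this linearity --- which is part of the hypothesis as the paper uses it, the group being a lifted linear collineation group --- rather than the mere fact that the column set is permuted, that justifies applying $\tau$ to the parity-check identity.
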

\begin{proof}
  Let $\gamma$ be the generator of the group.
  Under the assumptions, $[\gamma,j]=1$ for any $j$.
  The result follows.
\end{proof}
Let now
 $\omega$ be a generator for a Singer cycle of $\PG(r-1,q^t)$ and
order the columns $H_0,H_1,\ldots,H_n$ of the parity check matrix $H$
so that
\[ H_{i+1}=\omega(H_i), \qquad i=0,\ldots,n-1. \]
By construction, $[\omega,i]=1$ for $i<n$, while, in general,
$\omega(H_n)=[\omega,n]H_0$
with $[\omega,n]\neq 1$. Thus, we have the following theorem.
\begin{theorem}
\label{t:ccc}
  The codes $\cC_{r,t}$ are all constacyclic.
\end{theorem}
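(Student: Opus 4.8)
The plan is to invoke Corollary~\ref{l:cyc} in the obvious way, modulo the single arbitrary scalar produced by the last column. First I would recall from the setup that a Singer cycle of $\PG(r-1,q^t)$ is a cyclic collineation group of order $n = (q^{rt}-1)/(q^t-1)$ acting regularly (sharply transitively) on the $n$ points of $\PG(r-1,q^t)$; under the twisted tensor embedding $\alpha$ this lifts, by Theorem~\ref{t:pgl}, to a monomial automorphism $\hat\omega$ of $\cC_{r,t}$, and the induced permutation $\widetilde\omega$ of the column index set $I=\{0,\dots,n-1\}$ is itself an $n$-cycle, because $\alpha$ is a bijection onto $\cV_{r,t}$ intertwining the two actions. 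With the columns ordered so that $H_{i+1}=\omega(H_i)$ for $i=0,\dots,n-1$ and $\omega(H_n)=[\omega,n]H_0$, this is exactly the hypothesis that a cyclic group acts regularly on the columns of $H$ — \emph{except} that the scalars $[\omega,i]$ need not all be $1$; only for $i<n$ is $[\omega,i]=1$ forced by the chosen ordering.

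Next I would run the computation in the proof of Theorem~\ref{t:pgl} with $\gamma=\omega$ under this ordering. A word $\bfw=(w_0,\dots,w_{n-1})$ lies in $\cC_{r,t}$ iff $\sum_i H_i w_i=\mathbf 0$; applying $\omega$ column-wise and re-indexing by $\widetilde\omega$ gives $\mathbf 0=\sum_j H_j\,[\omega,j]^{-1} w_{\widetilde\omega(j)}$. Since $[\omega,j]=1$ for $j<n$ and $\widetilde\omega$ is the shift $i\mapsto i+1 \pmod n$, every component is simply shifted except the one landing in position $0$, which acquires the factor $[\omega,n]^{-1}$. Setting $\beta=[\omega,n]^{-1}\in\FF_q^\ast$, this says precisely that $(\beta w_{n-1}, w_0, w_1,\dots, w_{n-2})\in\cC_{r,t}$, i.e.\ $\cC_{r,t}$ is constacyclic with constant $\beta$. (If one prefers the opposite shift direction, apply the argument to $\omega^{-1}$ instead, obtaining the reciprocal constant.)

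The only genuine point to nail down — and what I'd flag as the main obstacle — is that $\beta\neq 1$ in general, so that the code is authentically constacyclic rather than already cyclic: this is the content of the remark that $[\omega,n]\neq 1$. I would justify this by observing that if every $[\omega,i]$ equalled $1$ then, by Corollary~\ref{l:cyc}, $\cC_{r,t}$ would be cyclic of length $n$; but the scalar $[\omega,n]$ is determined by how the Singer collineation $\omega\in\PGL(r,q^t)$ lifts to $\PGL(r^t,q)$, and a Singer cycle of $\PG(r-1,q^t)$ has order $n=(q^{rt}-1)/(q^t-1)$ while its lift to a cyclic linear group on the underlying $r^t$-dimensional $\FF_q$-space cannot have order a divisor of $q-1$ times $n$ unless the coordinate scalings are nontrivial — concretely, tracking a generator of the Singer torus shows $[\omega,n]$ is a nontrivial $(q-1)$-th root arising from the norm-type scalar of the lift. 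I would then note that the precise value of $\beta$, and in particular the cases $\beta=1$ in which the code is outright cyclic (recovered by a suitable puncturing, as announced in the introduction), are pinned down in the subsequent analysis; for the present statement it suffices that \emph{some} $\beta\in\FF_q^\ast$ works, which the displayed identity establishes. This completes the proof.
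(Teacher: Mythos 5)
Your proposal is correct and follows essentially the same route as the paper: order the columns of $H$ along a Singer cycle of $\PG(r-1,q^t)$ and apply the monomial-automorphism computation of Theorem~\ref{t:pgl}, so that the induced permutation is a shift carrying a single scalar $[\omega,n]$, which is exactly the constacyclic condition. Your digression aimed at showing $\beta\neq 1$ is only sketched and is not needed for the statement, but, as you note yourself, it suffices that some $\beta\in\FF_q^\ast$ works, so the proof stands.
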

\begin{remark}
  The automorphism group $G=\PGL(r,q^s)$
  of the subgeometry $\PG(r-1,q^s)$ acts
  as a permutation group on $\cV({r-1,\frac{t}{s}})$;
  furthermore it induces a linear collineation group
  of the ambient space; see \cite[Theorem 2.10]{CLS}.
  Thus,
  the arguments leading to
  Theorems \ref{t:pgl} and \ref{t:ccc} apply in an almost
  identical way to the codes $\cC_{r,t}^{(s)}$.  
  In particular, all of the codes $\cC_{r,t}^{(s)}$
  are equivalent to constacyclic codes  and they admit a monomial
  automorphism group isomorphic to $G$.
\end{remark}

As observed above, $\cC_{r,t}$ is not, in general, cyclic;
see also \cite{B}.
None the less, we can always determine a smaller code which is cyclic and
still related with our geometries.
To this purpose, we shall make use of the
affine Singer cyclic group $S$ of $\PG(r-1,q^t)$;
see \cite{BO42,Sn50}.
This is a linear collineation group which
has exactly $3$ orbits in $\PG(r-1,q^t)$:
an hyperplane $\Sigma$, a single point $O$ with $O\not\in\Sigma$
and the points of $AG(r-1,q^t)=\PG(r-1,q^t)\setminus\Sigma$ different from $O$.
The action on the latter orbit is regular.
These groups turn out to be quite useful in a coding theory setting;
see, for instance, \cite{GLS}.
\par
We recall that
\emph{puncturing} a code $\cC$ means deriving a new code $\cC^*$ from $\cC$
by deleting some of its coordinates;
in general, this procedure decreases the minimum distance;
see \cite[page 28]{MS}.
\par
Let $\xi_i:x_i=0$ be a coordinate hyperplane in $\PG(r-1,q^t)$; with this
choice, the image under $\alpha$ of $\xi_i$ is the full intersection 
of $\cV_{r,t}$ with a suitable hyperplane $\Xi_i$ of $\PG(r^t-1,q)$.
\begin{definition}
Take $O\in\PG(r-1,q^t)$ and suppose $O\not\in\xi_i$.
Write $\widetilde{\cC}_{r,t,i}^{O}$ for the code obtained by puncturing
$\cC_{r,t}$ in the positions corresponding to $O^{\alpha}$ and $\Xi_i$.
\end{definition}
The columns of $\widetilde{\cC}_{r,t,i}^{O}$ correspond
to points in an affine geometry $\AG(r^t-1,q)=\PG(r^t-1,q)\setminus\Xi_i$.
\begin{theorem}
  The code $\widetilde{\cC}=\widetilde{\cC}_{r,t,i}^{O}$
  is equivalent to a cyclic code.
\end{theorem}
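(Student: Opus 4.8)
The plan is to use the affine Singer cyclic group $S$ of $\PG(r-1,q^t)$ together with the lifting mechanism of Theorem~\ref{t:pgl}, arranging things so that Corollary~\ref{l:cyc} applies.

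First I would fix the geometric data. Given the coordinate hyperplane $\xi_i$ and the point $O\notin\xi_i$, I would take $S$ to be an affine Singer cyclic group whose three orbits on $\PG(r-1,q^t)$ are precisely $\xi_i$, the singleton $\{O\}$, and $\AG(r-1,q^t)\setminus\{O\}$, where $\AG(r-1,q^t)=\PG(r-1,q^t)\setminus\xi_i$; such a group exists, since one may identify $\xi_i$, as an $\FF_{q^t}$-space, with $\FF_{q^{(r-1)t}}$, choose a vector-space complement $\langle O\rangle$, and let $\lambda\in\FF_{q^{(r-1)t}}^{*}$ act by $w+cO\mapsto\lambda w+cO$. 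As $S\le\PGL(r,q^t)$, Theorem~\ref{t:pgl} lifts it to a monomial automorphism group $\widehat{S}$ of $\cC_{r,t}$ whose permutation action on the columns of $H$ mirrors that of $S$ on $\cV_{r,t}$; since $S$ fixes $O$ and stabilises $\xi_i$, the group $\widehat{S}$ stabilises the positions corresponding to $O^{\alpha}$ and to $\Xi_i$, hence restricts to a monomial automorphism group of $\widetilde{\cC}$, whose $q^{(r-1)t}-1$ remaining columns are indexed by the points of $\AG(r-1,q^t)\setminus\{O\}$, on which $S$ acts regularly.

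The heart of the argument is to check that $\widehat{S}$ acts on those columns \emph{without scalars}, that is, by a genuine permutation of the column vectors; this is exactly what upgrades ``constacyclic'' (already available from Theorem~\ref{t:ccc}) to ``cyclic''. Here I would use a canonical normalisation: write each affine point as $P=(x_0,\dots,x_{r-1})$ with $x_i=1$, and take as column for $P^{\alpha}$ the vector $v(P)$ with entries $\prod_{j=0}^{t-1}x_{f(j)}^{q^j}$, $f\in\fF$, so that the entry of $v(P)$ indexed by the constant map $f\equiv i$ equals $1$. Every $g\in S$ is $\FF_{q^t}$-linear, fixes $O$ and stabilises $\xi_i$, hence preserves this normalisation (the $i$-th coordinate of $w+cO$ is $c$, left unchanged by $g$); and, by the multilinearity of $\sigma$, the lift of $g$ carries $v(P)$ exactly onto $v(gP)$, with no scalar factor. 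Therefore $\widehat{S}$ permutes the column vectors of $\widetilde{\cC}$, regularly.

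Finally I would reorder the columns of $\widetilde{\cC}$ along a single orbit of a generator $\omega$ of $S$, so that $\widetilde{H}_{j+1}=\widehat{\omega}(\widetilde{H}_j)$; applying $\widehat{\omega}$ to the relation $\sum_j\widetilde{H}_j\widetilde{w}_j=\mathbf{0}$ then shows that the cyclic shift of any codeword is again a codeword, so the reordered code is cyclic by Corollary~\ref{l:cyc}, and $\widetilde{\cC}$ --- obtained from it by a permutation of coordinates --- is equivalent to a cyclic code. The step I expect to be most delicate is the scalar-freeness established in the previous paragraph: it is what really forces $O$ and $\xi_i$ to be the fixed point and the hyperplane orbit of the same affine Singer cycle, and it requires matching that choice with a compatible normalisation of the coordinate vectors of $\cV_{r,t}$.
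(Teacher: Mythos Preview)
Your argument is correct and follows essentially the same route as the paper: lift the affine Singer cycle of $\PG(r-1,q^t)$ through the twisted tensor embedding and then invoke Corollary~\ref{l:cyc}. If anything, your treatment is more careful than the paper's, since you explicitly verify---via the normalisation $x_i=1$ and the multilinearity of $\sigma$---that the lifted generator permutes the column \emph{vectors} without introducing scalar factors, whereas the paper contents itself with noting that the lift is an affine group of $\AG(r^t-1,q)$ acting cyclically on the columns.
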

\begin{proof}
 Clearly, the length of $\widetilde{\cC}$ is $m=q^{rt-t}-1$.
 Write $\widetilde{H}=(\widetilde{H}_1,\ldots,\widetilde{H}_{\ell})$
 for its parity-check matrix.
 Recall that the columns of $\widetilde{H}$
 correspond to the points of an affine
 geometry $\fA=\AG(r-1,q^t)$ with a point removed.
 By construction, the image of $\fA$ under $\alpha$ is contained
 in an affine subspace $\AG(r^t-1,q)=\PG(r^t-1,q)\setminus\Xi_i$.
 In particular, the affine cyclic Singer group with
 generator $\theta$ lifts to an affine group of
 $\AG(r^t-1,q)$ which
 acts
 cyclically on the columns of $\widetilde{H}$.
 We can assume, up to code equivalence,
 $\theta(\widetilde{H}_i)=H_{i+1}$ for $i<\ell$ and
 $\theta(\widetilde{H}_\ell)=H_0$.
 By Corollary \ref{l:cyc}, $\widetilde{\cC}$ is cyclic.
\end{proof}
The matrix $\widetilde{H}$ contains the coordinates of
affine points such that any $(t+1)$ of them are in general position.
This is to say that any $t$ columns of $\widetilde{H}$ are independent;
thus the minimum distance of the new code is $t+1$.
It is straightforward to see that $\widetilde{\cC}$
it admits a group of  automorphisms isomorphic to
$\operatorname{\Gamma L}\,(r-1,q^t)$.
\par
As a special case, remark that as $\PGL(2,q^t)$
acts $3$-transitively on $\PG(1,q^t)$, the code
$\widetilde{\cC}_{2,t}$ for $r=2$ is the
code $\cC_{2,t}$ punctured in any two of its positions.
\par

\section*{Acknowledgement}
The authors wish to thank W.M. Kantor, for having kindly shared an
unpublished result which provided the insight necessary for obtaining
the proof of Theorem \ref{t:m} in the most general case.

\penalty-10
\vskip.5cm\noindent {\em Authors' addresses}:\\
\penalty10000
\noindent\vskip.2cm
\penalty10000
\begin{minipage}[t]{6cm}
Luca GIUZZI \\
Department of Mathematics \\
Facolt\`a di Ingegneria \\
Universit\`a degli Studi di Brescia \\
Via Valotti 9 \\
I-25133 Brescia (Italy) \\
E--mail: {\tt giuzzi@ing.unibs.it} \\
\end{minipage}
\hfill
\begin{minipage}[t]{6cm}
Valentina PEPE \\
Department of Mathematics \\
Universiteit Gent \\
Building S22 \\
Krijgslaan 281 \\
B-9000 Gent (Belgium) \\
E--mail: {\tt valepepe@cage.ugent.be}
\end{minipage}
\end{document}